\numberwithin{equation}{section}
\def\pmod #1{\ ({\rm{mod}}\ #1)}
\theoremstyle{plain}
\newtheorem{theorem}{Theorem}
\newtheorem{lemma}{Lemma}
\newtheorem{problem}{Problem}
\newtheorem{corollary}{Corollary}
\newtheorem{proposition}{Proposition}
\theoremstyle{definition}
\newtheorem{remark}{Remark}
\patchcmd{\@settitle}{\uppercasenonmath\@title}{}{}{}
\patchcmd{\@setauthors}{\MakeUppercase}{}{}{}
\patchcmd{\section}{\scshape}{}{}{}
\begin{document}

\title
[{On a problem of Romanoff type}]
{On a problem of Romanoff type}

\author
[Yuchen Ding]
{Yuchen Ding}

\address{(Yuchen Ding) School of Mathematical Science,  Yangzhou University, Yangzhou 225002, People's Republic of China}
\email{ycding@yzu.edu.cn}

\keywords{Romanoff theorem, Polignac conjecture, Prime set}
\subjclass[2010]{11P32, 11A41, 11B13.}

\begin{abstract} Let $\mathcal{P}$ and $\mathbb{N}$ be the sets of all primes and natural numbers, respectively. In this article, it is proved that there is a positive lower density of the natural numbers which can be represented by the form $$p+2^{m_1^2}+2^{m_2^2}~~(p\in \mathcal{P},m_1,m_2\in \mathbb{N}).$$
This solves a problem of Chen and Yang in 2014.
\end{abstract}
\maketitle

\section{Introduction}
In 1849, de Polignac \cite{de1} conjectured that every odd number greater than 1 is the sum of a prime and a power of 2. However, de Polignac \cite{de2} himself soon recognized that 127 and 959 are two counterexamples. Actually, he pointed out that Euler had already found the same counterexamples many years ago. Later, de Polignac's report aroused great interest to Romanoff. A remarkable theorem of Romanoff \cite{Ro} states that the odd numbers which can be represented by the form $$p+2^m~(p\in\mathcal{P},m\in \mathbb{N})$$ have a positive lower density. In the opposite direction, van der Corput \cite{va} proved that the odd numbers greater than 1 which can not be represented by the form $p+2^m$ with $p\in\mathcal{P}$ and $m\in \mathbb{N}$ still possess a positive lower density of all the natural numbers. In 1950, answering a question of Romanoff, Erd\H{o}s \cite{Er} constructed a specific arithmetic progression, none of which can be written as the sum of a prime and a power of 2. This is surprising since it is easy to see that van der Corput's result follows immediately from the one of Erd\H{o}s.

The quantitative version of Romanoff's theorem was first considered by Chen and Sun \cite{CS}, who showed that the lower density is larger than 0.0868. There are a few improvements of this lower density, see \cite{ES,Lv,Pi}. In the opposite direction as pointed by Chen and Sun, Erd\H{o}s' construction  \cite{Er} illustrated that the lower density is no larger than 0.49999991. Suggested by Bombieri, Pintz \cite{Pi} had conjectured that there is a constant $c_0$(=0.434...) such that
$$\lim_{x\rightarrow\infty}\frac{\#\{n:n\leqslant x,~ n=p+2^m, ~m\in\mathbb{N},~p\in\mathcal{P}\}}{x}=c_0.$$
Recently, Del Corso et al. \cite{DDD} thought that $0.437...$ is another probable candidate of the constant $c_0$.
There are a huge number of variants of Romanoff's theorem and the readers may refer to \cite{Ch1,Ch2,Ch3,Ch4,Ch5,CS,Cr,Cr2,Els,FFK,Pa1,Pa2,Pa3,SF,Ta,WS,Yu}.

Let $x$ be a large number. Define $$V=\left\{n:n=p+2^{m_1^2}+2^{m_2^2},p\in \mathcal{P},m_1,m_2\in \mathbb{N}\right\}$$
and $V(x)=|V\cap[1,x]|$.
In 2014, Chen and Yang \cite{Ch6} proved that
$$V(x)\gg\frac{x}{\log\log x}$$
and posed the following problem for further research.
\begin{problem}[Chen--Yang]\label{Pro1}Does there exist a positive integer $k$ such that the set of positive integers which can be represented as $p+\sum_{i=1}^{k}2^{m_i^2}$ with $p\in\mathcal{P}$ and $m_i\in \mathbb{N}$ has a positive lower density? If such $k$ exists, what is the minimal value of such $k$.
\end{problem}
Certainly, the problem of Chen and Yang is still a new variant of the Romanoff theorem. In this article, we shall prove that $k=2$ is admissible, i.e., $V(x)\gg x$. It is clear that $k=1$ does not satisfy the requirement of the problem since
$$\#\{p+2^{m^2}:p\in \mathcal{P},m\in \mathbb{N}\}\ll\frac{x}{\log x}\cdot\sqrt{\log x}=\frac{x}{\sqrt{\log x}}.$$
Thereby the minimal value of $k$ is $2$. Thus, we give a complete answer to the above problem. Now, we restate it as the following theorem.
\begin{theorem}\label{thm1} Let $V$ be defined as above. Then we have $V(x)\gg x.$
\end{theorem}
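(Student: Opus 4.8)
The plan is to run the Cauchy--Schwarz (Romanoff) argument for the two--parameter family of power summands. Fix a large $x$ and set $M=\lfloor\sqrt{\tfrac12\log_2 x}\,\rfloor$, so that $2^{m_1^2}+2^{m_2^2}\le 2^{M^2+1}\le x/2$ whenever $1\le m_1,m_2\le M$ and $M^2\asymp\log x$. Put
$$r(n)=\#\bigl\{(p,m_1,m_2):p\in\mathcal{P},\ 1\le m_1,m_2\le M,\ p+2^{m_1^2}+2^{m_2^2}=n\bigr\},$$
so that $r(n)>0$ implies $n\in V$, and it suffices to show $\#\{n\le x:r(n)>0\}\gg x$. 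By Cauchy--Schwarz this is implied by $\bigl(\sum_{n\le x}r(n)\bigr)^2\gg x\sum_{n\le x}r(n)^2$. The first moment is trivial: $\sum_{n\le x}r(n)=\sum_{1\le m_1,m_2\le M}\pi\bigl(x-2^{m_1^2}-2^{m_2^2}\bigr)\ge M^2\pi(x/2)\gg M^2\cdot\frac{x}{\log x}\gg x$. So everything reduces to the second--moment bound $\sum_{n\le x}r(n)^2\ll x$.

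Expanding the square, $\sum_{n\le x}r(n)^2$ counts the sextuples $(p_1,m_1,m_2,p_2,m_3,m_4)$ with $p_1+2^{m_1^2}+2^{m_2^2}=p_2+2^{m_3^2}+2^{m_4^2}\le x$. If $2^{m_1^2}+2^{m_2^2}=2^{m_3^2}+2^{m_4^2}$ then, by uniqueness of binary expansions, $\{m_1,m_2\}=\{m_3,m_4\}$ and $p_1=p_2$, and such sextuples contribute $\ll M^2\pi(x)\ll x$. Otherwise set $h=\bigl|(2^{m_1^2}+2^{m_2^2})-(2^{m_3^2}+2^{m_4^2})\bigr|\ge 1$; since $p_1-p_2=\pm h$, a standard upper--bound sieve for prime pairs with difference $h$ gives at most $\ll\frac{x}{(\log x)^2}\cdot\frac{h}{\phi(h)}$ choices of $(p_1,p_2)$ for each fixed quadruple. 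Hence it remains to prove
$$\sum_{\substack{1\le m_1,m_2,m_3,m_4\le M\\ h\ne 0}}\frac{h}{\phi(h)}\ \ll\ M^4\ \asymp\ (\log x)^2.$$
I emphasize that the trivial bound $h/\phi(h)\ll\log\log x$ yields only $\sum r(n)^2\ll x\log\log x$, hence $V(x)\gg x/\log\log x$ — which is exactly the Chen--Yang estimate; the whole point is to remove this $\log\log x$.

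To prove the displayed estimate, write $h/\phi(h)=\sum_{d\mid h}\mu^2(d)/\phi(d)$ and interchange summation, obtaining $\sum_{d}\frac{\mu^2(d)}{\phi(d)}A(d)$ with $A(d)=\#\{(m_1,m_2,m_3,m_4)\in[1,M]^4:h\ne0,\ d\mid h\}$, where only $d\le 2^{M^2+1}$ occur since $h\ne0$. As $h$ is even one may take $d$ odd, and then, with $e=\mathrm{ord}_d(2)$, the residue $2^{m^2}\bmod d$ depends only on $m^2\bmod e$, while $\#\{m\le M:m^2\equiv r\ (\mathrm{mod}\ e)\}\ll(M/e+1)\rho_e(r)$, $\rho_e(r)$ being the number of square roots of $r$ modulo $e$. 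In essence $A(d)$ is an additive--energy count for the set $\{2^{m^2}\bmod d:1\le m\le M\}$; combining the above with a separate treatment of the ranges $e\le M$, $M<e\le M^2$, $e>M^2$ (in the last of which $m\mapsto 2^{m^2}\bmod d$ is injective on $[1,M]$, and moduli $d$ with $h=\pm d$ are essentially determined by the quadruple), one aims for a bound $A(d)\ll M^4/\mathrm{ord}_d(2)$ up to arithmetic factors; the estimate then follows from the convergence of a Romanoff--type series refining $\sum_{d\ \mathrm{odd}}1/\bigl(d\,\mathrm{ord}_d(2)\bigr)<\infty$.

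The main obstacle is precisely obtaining a bound for $A(d)$ strong enough to make that series converge. The difficulty is concentrated in moduli $d$ for which $\mathrm{ord}_d(2)$ is small compared with $d$ — equivalently, in quadruples for which $(2^{m_1^2}+2^{m_2^2})-(2^{m_3^2}+2^{m_4^2})$ is divisible by many small primes — since there the crude estimate $\#\{m\le M:m^2\equiv r\ (\mathrm{mod}\ e)\}\le(M/e+1)\max_r\rho_e(r)$ loses a factor of size roughly $2^{\omega(e)}$ coming from the many square roots of a typical residue. To win this factor back one must either exploit the equidistribution of $2^{m^2}\bmod d$ over the subgroup generated by $2$ (an exponential--sum input) or argue directly that the quadruples responsible for the loss are sufficiently sparse. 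This is exactly where the square exponents $m_i^2$, in place of plain exponents $m_i$, make the problem harder than classical Romanoff, and it is the step that upgrades Chen--Yang's $x/\log\log x$ to $V(x)\gg x$.
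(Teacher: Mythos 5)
Your setup coincides step for step with the paper's: Cauchy--Schwarz, the trivial first moment, the diagonal case $\{m_1,m_2\}=\{m_3,m_4\}$ via uniqueness of binary expansions, the sieve bound for prime pairs at fixed distance $h$, the expansion of $h/\phi(h)$ over squarefree divisors, and the reduction to counting solutions of $k^2\equiv r\pmod{e}$ with $e=\mathrm{ord}_d(2)$. But your final paragraph concedes that you cannot prove the bound on $A(d)$ that your argument requires, so the proof is incomplete exactly at the point that separates $V(x)\gg x$ from the Chen--Yang bound $V(x)\gg x/\log\log x$. Moreover the target you set yourself, $A(d)\ll M^4/\mathrm{ord}_d(2)$ ``up to arithmetic factors,'' is not the right one to aim for: the count of $k\leqslant M$ with $k^2\equiv r\pmod{e}$ is governed by $(M/e+1)\rho_e(r)$, and $\rho_e(r)$ is genuinely large for many $e$, so no equidistribution input will recover a clean $1/e$.

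The paper closes the gap with two ingredients you are missing, neither of which is an exponential-sum or equidistribution statement. First, the number of solutions of $z^2\equiv a\pmod{e}$ in $\mathbb{Z}/e\mathbb{Z}$ is at most $4\sqrt{e}$ for \emph{every} $e$ (an elementary prime-by-prime count); hence for $e\leqslant M$ one gets $\ll (M/e+1)\cdot 4\sqrt{e}\ll M/\sqrt{e}$ admissible values of $k_2$, i.e.\ only $A(d)\ll M^4/\sqrt{e_2(d)}$. This weaker bound suffices because of the second ingredient: the Erd\H{o}s--Tur\'an theorem that $\sum_{2\nmid d}\frac{1}{d\sqrt{e_2(d)}}<\infty$, a strengthening of the usual Romanoff series $\sum_d \frac{1}{d\,e_2(d)}<\infty$ calibrated precisely to absorb the square-root multiplicity. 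For the complementary range $e_2(d)>M$ the paper does not invoke injectivity of $m\mapsto 2^{m^2}$; it proves elementarily that $z^2\equiv a\pmod{e}$ has $\ll M^{2/3}$ solutions with $1\leqslant z\leqslant M$ (when $e\geqslant M^{4/3}$ each interval $[\sqrt{(j-1)e},\sqrt{je})$ contains at most one solution, and otherwise the $4\sqrt{e}$ bound applies), and this power saving in $M$ beats the factor $\log\log x$ coming from Mertens' estimate for $\sum_{P^+(d)<\log x}\mu^2(d)/d$, restricted to which all relevant $d$ may be taken after discarding the primes $p\geqslant\log x$, of which $h<2x$ has only $O(\log x)$ many. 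You would need to supply both of these inputs to turn your outline into a proof.
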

\begin{remark} Actually, one can prove a slightly stronger result using the same arguments.
$$V_g(x)=\#\left\{n:n\leqslant x,~n=p+g^{m_1^2}+g^{m_2^2},p\in \mathcal{P},m_1,m_2\in \mathbb{N}\right\}\gg_gx,$$
where $g$ is a fixed positive integer larger than 1. We do not proceed this general case as the consideration of powers of $2$ in Romanoff type problems is a common practice.
\end{remark}

From the Theorem \ref{thm1}, it follows clearly that we have the following corollary by the Shnirel'man density (see for example \cite{Na}).

\begin{corollary}\label{coro1} There exists an integer $s$ such that every sufficiently large integer can be represented by the form
$$p_1+\cdot\cdot\cdot+p_s+2^{m_1^2}+\cdot\cdot\cdot+2^{m_{2s}^2},$$
where $p_i\in \mathcal{P}$ for $1\leqslant i\leqslant s$ and $m_j\in \mathbb{N}$ for $1\leqslant j\leqslant 2s$.
\end{corollary}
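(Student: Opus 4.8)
The plan is to run the second-moment (Cauchy--Schwarz) method behind Romanoff's theorem, taking the shift set to be the sums of two powers $\{2^{m_1^2}+2^{m_2^2}\}$, and then to work the error term down to its true order rather than the $x/\log\log x$ obtained by Chen and Yang. Fix a large $x$ and choose $M\asymp\sqrt{\log x}$ with $2^{m^2}\le\sqrt x$ for $0\le m\le M$; for $n\in(x/2,x]$ set
$$r(n)=\#\bigl\{(m_1,m_2)\in\{0,\dots,M\}^2:\ n-2^{m_1^2}-2^{m_2^2}\in\mathcal P\bigr\}.$$
Since $r(n)\ge1$ implies $n\in V$, the Cauchy--Schwarz inequality gives $\bigl(\sum_n r(n)\bigr)^2\le\#\{n\in(x/2,x]:r(n)\ge1\}\cdot\sum_n r(n)^2$, so it suffices to prove $\sum_n r(n)\gg x$ and $\sum_n r(n)^2\ll x$.

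The first bound is routine. Interchanging sums, $\sum_n r(n)=\sum_{(m_1,m_2)}\#\{p\in\mathcal P:\ p\in(x/2-s,\,x-s]\}$ with $s=2^{m_1^2}+2^{m_2^2}\le\sqrt x$, each inner term being the number of primes in an interval of length $x/2$ lying inside $[x/4,x]$, hence $\gg x/\log x$ by the prime number theorem; as there are $\asymp M^2\asymp\log x$ pairs, $\sum_n r(n)\gg x$. For the second moment, expanding and substituting $p=n-2^{m_1^2}-2^{m_2^2}$,
$$\sum_n r(n)^2=\sum_{\mathbf m,\,\mathbf m'}\#\bigl\{p\in I_{\mathbf m}:\ p,\ p+h\in\mathcal P\bigr\},\qquad h=2^{m_1^2}+2^{m_2^2}-2^{m_1'^2}-2^{m_2'^2},$$
with $I_{\mathbf m}=(x/2-s,\,x-s]$ of length $x/2$. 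The diagonal $h=0$ forces $\{m_1,m_2\}=\{m_1',m_2'\}$ (unique binary expansions), so there are $\ll M^2$ such quadruples, each contributing $\le\pi(x)$, total $\ll M^2\cdot x/\log x\asymp x$; the quadruples with $h$ odd contribute $O(1)$ each, hence $\ll M^4\ll x$ in all; and for the rest a Brun or Selberg upper-bound sieve gives $\#\{p\in I_{\mathbf m}:p,p+h\in\mathcal P\}\ll\frac{x}{(\log x)^2}\cdot\frac{|h|}{\phi(|h|)}$. Thus everything reduces to the estimate
$$\Sigma:=\sum_{\substack{\mathbf m,\,\mathbf m'\in\{0,\dots,M\}^2\\ h\neq0}}\frac{|h|}{\phi(|h|)}\ \ll\ M^4\ \asymp\ (\log x)^2.$$

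The hard part will be this last estimate, and it is precisely here that the factor $\log\log x$ of Chen and Yang enters: bounding $|h|/\phi(|h|)\ll\log\log x$ uniformly gives only $\Sigma\ll M^4\log\log x$, so one must instead use that $|h|/\phi(|h|)$ is $O(1)$ on average. I would write $\frac{|h|}{\phi(|h|)}=\sum_{d\mid|h|,\ \mu^2(d)=1}\phi(d)^{-1}$ and interchange, obtaining $\Sigma=\sum_{d\ \mathrm{squarefree}}\phi(d)^{-1}Q(d)$ with $Q(d)=\#\{(\mathbf m,\mathbf m'):h\neq0,\ d\mid h\}$; the term $d=1$ already supplies the main contribution $\le M^4$, and the task is to show $\sum_{d\ge2}\phi(d)^{-1}Q(d)\ll M^4$. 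For $d\ge2$ (only odd $d$ matter) the count $Q(d)$ is governed by the congruence $2^{m_1^2}+2^{m_2^2}\equiv 2^{m_1'^2}+2^{m_2'^2}\pmod d$, and since $2^{m^2}\bmod d$ depends only on $m^2$ modulo $\ell:=\mathrm{ord}_d(2)$, one would estimate it by sorting the $m\le M$ according to $m^2\bmod\ell$, using an upper bound for the number of ways a residue class is a sum of two powers of $2$ modulo $d$, and counting $m\le M$ in a prescribed square-class mod $\ell$. For $d$ whose order $\ell$ is not too small the values $2^{m_1^2}+2^{m_2^2}\bmod d$ spread over many residues and $Q(d)$ has the expected, acceptable size. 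The genuine difficulty is the $d$ with $\ell$ small --- the divisors of $2^t-1$ with $t=O(\log x)$ --- where $2^{m^2}$ is highly concentrated modulo $d$: for these one needs a non-trivial bound on $Q(d)$ in terms of $\mathrm{ord}_d(2)$ (this is where the sum-of-two-powers structure, not available for Romanoff's single shift $2^m$, is expected to help) and then a Romanoff-type convergence lemma --- resting ultimately on the classical fact that $\sum_d\bigl(\phi(d)\,\mathrm{ord}_d(2)\bigr)^{-1}<\infty$ and on the sparsity of integers with a prescribed small value of $\mathrm{ord}_d(2)$ --- to sum the bound over all such $d$. This is the step I expect to be the main obstacle.

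Granting $\Sigma\ll M^4$, one gets $\sum_n r(n)^2\ll x$, and then the Cauchy--Schwarz inequality of the first paragraph yields $\#\{n\in(x/2,x]:r(n)\ge1\}\ge\bigl(\sum_n r(n)\bigr)^2/\sum_n r(n)^2\gg x^2/x=x$, so $V(x)\gg x$, which is Theorem~\ref{thm1}. The stated Corollary~\ref{coro1} then follows at once from the positive lower density of $V$ by the Shnirel'man-density argument recalled in the introduction.
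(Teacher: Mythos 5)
Your skeleton is exactly the paper's: the Cauchy--Schwarz second-moment argument reducing Theorem \ref{thm1} to $\sum_{h\neq 0}\prod_{p\mid h}(1+1/p)\ll(\log x)^2$, followed by the Shnirel'man-density deduction of Corollary \ref{coro1}. (Indeed, since the Corollary comes after Theorem \ref{thm1} in the paper, its entire proof there is your last sentence; everything before it in your proposal is a re-derivation of the theorem.) The problem is that your re-derivation stops precisely where the paper's actual content lies: you write ``Granting $\Sigma\ll M^4$'' and call the treatment of the moduli $d$ with small $\mathrm{ord}_d(2)$ ``the step I expect to be the main obstacle.'' The central estimate is therefore asserted, not proved, and the hints you give about how to prove it do not match the mechanism that actually works.

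Here is what is missing. After expanding the product over primes $2<p<\log x$ as $\sum_d\mu^2(d)/d$ over odd squarefree $d$ with $P^+(d)<\log x$, the paper fixes $d$ and three of the four exponents $m_1,m_2,k_1$; the condition $d\mid h$ then pins $2^{k_2^2}$ to a single residue class mod $d$, hence pins $k_2^2$ to a single residue class modulo $\ell=e_2(d)$. The decisive input is that a quadratic congruence $z^2\equiv a\pmod{\ell}$ has at most $4\sqrt{\ell}$ solutions (Proposition \ref{pro1}), so for $\ell\leqslant M$ the number of admissible $k_2\leqslant M$ is $\ll(M/\ell+1)\sqrt{\ell}$, and the sum over such $d$ converges by the Erd\H{o}s--Tur\'an bound $\sum_{2\nmid d}1/(d\sqrt{e_2(d)})<\infty$; it is the squares in the exponents, not the sum-of-two-powers structure you invoke, that are being exploited here. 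For $\ell>M$ the situation is not, as you suggest, unproblematic: the trivial bound $M$ on the number of admissible $k_2$ combined with Mertens' estimate $\sum_{P^+(d)<\log x}\mu^2(d)/d\ll\log\log x$ reproduces exactly the Chen--Yang loss of $\log\log x$, and one needs Proposition \ref{pro2} --- the elementary bound $N(y,m;a)\ll y^{2/3}$, obtained by observing that each interval $[\sqrt{jm},\sqrt{(j+1)m})$ contains at most one solution of $z^2\equiv a\pmod m$ --- to beat it. Neither ingredient appears in your proposal, so as a self-contained argument it has a genuine gap at the one step that distinguishes $V(x)\gg x$ from $V(x)\gg x/\log\log x$; if instead you are permitted simply to cite Theorem \ref{thm1}, then only your final sentence is needed and it coincides with the paper's proof.
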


\section{Some Auxiliary Results}
Let $a,m,r,k,d,\ell$ and $n$ be integers and $y$ be a positive number. The order of $a$ modulo $d$ is the smallest positive integer $\ell$ such that
$$a^\ell\equiv 1\pmod{d}.$$
The aim of this section is to establish two  propositions which shall be used in the proof of our theorem.
\begin{proposition}\label{pro1} The number of the solutions to the congruence equation
$$z^2\equiv a\pmod{m}$$
is at most $4\sqrt{m}$.
\end{proposition}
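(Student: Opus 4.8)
The plan is to decompose the modulus via the Chinese Remainder Theorem and count solutions one prime at a time. Since replacing $a$ by its least nonnegative residue modulo $m$ does not change the solution set, I may assume $0\le a<m$. Factoring $m=\prod_i p_i^{e_i}$, the Chinese Remainder Theorem identifies $\mathbb{Z}/m\mathbb{Z}$ with $\prod_i\mathbb{Z}/p_i^{e_i}\mathbb{Z}$, so the number $N(m)$ of solutions of $z^2\equiv a\pmod m$ equals $\prod_i N(p_i^{e_i})$, where $N(q)$ denotes the number of solutions modulo $q$. It therefore suffices to estimate each local factor $N(p^e)$ and multiply.

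For each prime power $p^{e}$ with $p^{e}\,\|\,m$, I would split according to $v_p(a)$. When $p\nmid a$, Hensel's lemma lifts any root modulo $p$ uniquely to $p^e$ (for $p=2$ one instead uses the explicit structure of $(\mathbb{Z}/2^e\mathbb{Z})^\times$), which gives $N(p^e)\le 2$ for odd $p$ and $N(2^e)\le 4$. When $p\mid a$ but $a\not\equiv 0\pmod{p^e}$, comparing $p$-adic valuations shows a solution can only exist if $k:=v_p(a)$ is even, in which case the substitution $z=p^{k/2}w$ reduces the count to the unit case modulo $p^{\,e-k}$, at the cost of a factor $p^{k/2}$ from the lifts of each such $w$; since $k\le e-1$ this factor is at most $p^{(e-1)/2}$. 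When $a\equiv 0\pmod{p^e}$, the congruence says exactly $z\equiv 0\pmod{p^{\lceil e/2\rceil}}$, so $N(p^e)=p^{\lfloor e/2\rfloor}$. Collating these, one checks $N(p^e)\le\sqrt{p^e}$ for every prime $p\ge 5$, while $N(3^e)\le\frac{2}{\sqrt3}\sqrt{3^e}$ and $N(2^e)\le\sqrt{2}\,\sqrt{2^e}$.

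Multiplying the local bounds over the prime powers dividing $m$ then gives
\[
N(m)\;\le\;\sqrt{2}\cdot\frac{2}{\sqrt3}\cdot\sqrt{m}\;=\;\sqrt{8/3}\,\sqrt{m}\;<\;4\sqrt{m},
\]
where the factor $\sqrt2$ (resp. $\frac{2}{\sqrt3}$) is present only when $2\mid m$ (resp. $3\mid m$); in fact the argument yields the sharper constant $\sqrt{8/3}=1.63\ldots$, but $4$ is all that is needed here.

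The crux is the local computation in the ramified case $p\mid a$: one must correctly account for the $p^{k/2}$ factor produced by lifting $z=p^{k/2}w$ and verify that $\sqrt{p^e}$ always absorbs it, and one must deal with the well-known but slightly delicate description of squares modulo powers of $2$. The structural point that keeps the constant bounded — rather than something like $2^{\omega(m)}$ — is that for $p\ge 5$ the local count never exceeds $\sqrt{p^e}$, so only the primes $2$ and $3$ contribute multiplicative constants, and their product stays comfortably below $4$. Everything else is routine bookkeeping with the Chinese Remainder Theorem.
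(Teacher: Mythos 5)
Your proof is correct and follows essentially the same route as the paper: factor $m$, use multiplicativity of the solution count over prime powers, and bound each local factor by $\sqrt{p^e}$ up to a bounded constant at $p=2$ and $p=3$. The only difference is that you derive the local counts directly via Hensel's lemma and $p$-adic valuations where the paper cites Lehman's explicit formulas (Lemmas \ref{lem1} and \ref{lem2}), which incidentally yields the sharper constant $\sqrt{8/3}$ in place of $4$.
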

The proof of Proposition \ref{pro1} follows directly from the following two lemmas. 

\begin{lemma}\label{lem1}\cite[Corollary 0.3.3]{Le} Let $f(z)$ be a quadratic polynomial with integer coefficients. If $m$ is a positive integer, let $n_m(f)$ be the number of solutions of $f(z)\equiv 0\pmod{m}$ in $\mathbb{Z}/m\mathbb{Z}$. Then
$$n_m(f)=\prod_{p}n_{p^e}(f),$$
where the product is taken over all primes, and where $e=e_p(m)$ is the exponent of $p$ in $m$, that is, the largest integer so that $p^e$ divides $m$. 
\end{lemma}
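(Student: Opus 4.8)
The plan is to deduce this multiplicativity directly from the Chinese Remainder Theorem (CRT). Write the prime factorization $m=\prod_{p\mid m}p^{e}$, where $e=e_p(m)$, and note that the prime powers $p^{e}$ attached to distinct primes are pairwise coprime. CRT therefore supplies a ring isomorphism
$$\phi:\mathbb{Z}/m\mathbb{Z}\;\xrightarrow{\ \sim\ }\;\prod_{p\mid m}\mathbb{Z}/p^{e}\mathbb{Z},\qquad z\bmod m\longmapsto\bigl(z\bmod p^{e}\bigr)_{p\mid m},$$
whose components are the canonical reduction homomorphisms $\mathbb{Z}/m\mathbb{Z}\to\mathbb{Z}/p^{e}\mathbb{Z}$.

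The key step is to observe that $f$ may be evaluated componentwise under $\phi$. Since each reduction map is a ring homomorphism and the coefficients of $f$ are integers, evaluation of $f$ commutes with reduction, giving $\phi\bigl(f(z)\bmod m\bigr)=\bigl(f(z)\bmod p^{e}\bigr)_{p\mid m}$. Because $\phi$ is injective, the left-hand side is the zero element exactly when every component on the right vanishes; that is, $f(z)\equiv0\pmod m$ holds if and only if $f(z)\equiv0\pmod{p^{e}}$ for every prime power $p^{e}$ exactly dividing $m$.

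Consequently $\phi$ restricts to a bijection between the set of solutions of $f(z)\equiv0\pmod m$ in $\mathbb{Z}/m\mathbb{Z}$ and the Cartesian product of the solution sets of $f(z)\equiv0\pmod{p^{e}}$ taken over the factors $\mathbb{Z}/p^{e}\mathbb{Z}$. Counting both sides yields
$$n_m(f)=\prod_{p\mid m}n_{p^{e}}(f).$$
Finally, for a prime $p\nmid m$ one has $e_p(m)=0$ and $n_{p^{0}}(f)=n_1(f)=1$, so adjoining these trivial factors extends the product to all primes without altering its value, which is precisely the stated identity. I expect no genuine obstacle here: the sole point requiring care is the compatibility of polynomial evaluation with the ring isomorphism, and this rests only on the fact that $f$ has integer coefficients. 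In particular the argument is insensitive to the degree of $f$, so the hypothesis that $f$ is quadratic is never used.
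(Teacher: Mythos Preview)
Your proof is correct: the multiplicativity of $n_m(f)$ follows exactly as you say from the Chinese Remainder Theorem together with the fact that polynomial evaluation commutes with the reduction homomorphisms, and your observation that the quadratic hypothesis is irrelevant here is also right. The paper does not supply its own proof of this lemma---it is simply quoted from \cite[Corollary~0.3.3]{Le}---so there is nothing further to compare against.
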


Let $\Delta$ be an integer congruent to $0$ or $1$ modulo $4$, define
\begin{equation}\label{Eq2.1}
\left(\frac{\Delta}{2}\right)=
\begin{cases}
 1,& \text{if~} \Delta\equiv 1\pmod{8},\\
 -1,& \text{if~} \Delta\equiv 5\pmod{8},\\
 0, & \text{if~} \Delta\equiv 0\pmod{4}.\\
\end{cases}
\end{equation}
\begin{lemma}\label{lem2}\cite[Theorem 0.3.4]{Le} Let $f(z)=az^2+bz+c$, with discriminant $\Delta=b^2-4ac$, and let $p$ be a prime number not dividing $a$. If $\Delta\neq0$, let $\ell\geqslant0$ be the largest integer for which $\Delta=p^{2\ell}\Delta_0$ with $\Delta_0\equiv 0$ or $1\pmod{4}$. Let $e$ be a nonnegative integer. If $e>2\ell$, then the following statements are true.\\
(1) If $\left(\frac{\Delta_0}{p}\right)=1$, then $n_{p^e}(f)=2p^{\ell}$.\\
(2) If $\left(\frac{\Delta_0}{p}\right)=-1$, then $n_{p^e}(f)=0$.\\
(3) If $\left(\frac{\Delta_0}{p}\right)=0$, then 
$n_{p^e}(f)=
\begin{cases}
p^{\ell},& \text{if~} e=2\ell+1,\\
0,& \text{if~} e>2\ell+1.
\end{cases}$\\
On the other hand, if $e<2\ell$ is written as $e=2k+r$ with $r=0$ or $1$, then $n_{p^e}(f)=p^k$. This equation also holds for all $e\geqslant0$ if $\Delta=0$.
\end{lemma}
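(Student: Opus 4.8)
The plan is to reduce $n_{p^e}(f)$ to the number of square roots of a single integer modulo $p^e$ and then to read off the four cases from the elementary formula for that count. I would first record the auxiliary \emph{square-root count}: for an integer $A$, write $A=p^{t}u$ with $p\nmid u$ (and set $t=\infty$ if $A=0$), and let $S(A,p^e)=\#\{w\bmod p^e:w^2\equiv A\pmod{p^e}\}$. Then $S=p^{\lfloor e/2\rfloor}$ when $t\geq e$, $S=0$ when $t<e$ is odd, and when $t=2s<e$ one has $S=2p^{s}$ or $0$ for odd $p$ according as $\left(\frac{u}{p}\right)=1$ or $-1$, while for $p=2$ one has $S=2^{s}\kappa$ with $\kappa=4,2,1$ in the ranges $e-2s\geq3,\,=2,\,=1$, the first two values dropping to $0$ unless $u\equiv1\pmod8$, respectively $u\equiv1\pmod4$. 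This is the standard computation: any solution has $v_p(w)=s$, so writing $w=p^{s}w'$ reduces to counting units $w'$ with $w'^2\equiv u$ on a smaller modulus (Hensel's lemma for odd $p$, the classical structure of $(\Z/2^m\Z)^{\times}$ for $p=2$), the factor $p^{s}$ coming from the lifts of $w'$ back to modulus $p^{e}$.

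For odd $p$ the reduction is immediate. Since $p\nmid 2a$, the affine map $z\mapsto w=2az+b$ is a bijection of $\Z/p^e\Z$, and from $4a\,f(z)=(2az+b)^2-\Delta$ we get $f(z)\equiv0\pmod{p^e}$ if and only if $w^2\equiv\Delta\pmod{p^e}$; hence $n_{p^e}(f)=S(\Delta,p^e)$. As $\Delta=b^2-4ac\equiv0,1\pmod4$ and $p^{2}\equiv1\pmod8$, dividing by $p^{2\ell}$ never disturbs the condition $\Delta_0\equiv0,1\pmod4$, so $\ell=\lfloor v_p(\Delta)/2\rfloor$, $\Delta_0=\Delta/p^{2\ell}$, and $\left(\frac{\Delta_0}{p}\right)$ is the Legendre symbol, which vanishes precisely when $v_p(\Delta)$ is odd. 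Substituting $t=v_p(\Delta)$ into $S(\Delta,p^e)$ now yields $2p^{\ell}$, $0$, and $p^{\ell}$ (at $e=2\ell+1$) exactly in cases (1), (2), (3) when $e>2\ell$, and yields $p^{k}$ when $e\leq2\ell$ (where $\Delta\equiv0\pmod{p^e}$); the degenerate $\Delta=0$ is the instance $t=\infty$, giving $p^{k}$ as well.

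The prime $p=2$ is the main obstacle, because $2a$ is no longer invertible and completion of the square must be arranged by hand. Here $a$ is odd, hence a unit mod $2^e$, and I would split on the parity of $b$. If $b$ is odd then $\Delta$ is odd, whence $\ell=0$ and $\Delta_0=\Delta\equiv1\pmod4$; moreover $f'(z)=2az+b$ is a unit mod $2$, so Hensel's lemma lifts each root mod $2$ uniquely and $n_{2^e}(f)=n_2(f)$. A direct check shows $f(0)\equiv f(1)\equiv c\pmod2$, so $n_2(f)=2$ or $0$ as $c$ is even or odd; since $\Delta\equiv1-4c\pmod8$, this is exactly $\left(\frac{\Delta_0}{2}\right)=+1$ (case (1), value $2=2\cdot2^{0}$) or $-1$ (case (2)). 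If $b=2b'$ is even, then multiplying by the unit $a$ gives $a\,f(z)=(az+b')^2+(ac-b'^2)$, and the bijection $z\mapsto az+b'$ turns the count into $S\!\left(\tfrac{\Delta}{4},2^e\right)$ with $\tfrac{\Delta}{4}=b'^2-ac$.

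The genuinely delicate point, and the bulk of the labour, is then the $p=2$ bookkeeping identifying $S(\Delta/4,2^e)$ with the stated values. One must verify that the maximal $\ell$ with $\Delta=4^{\ell}\Delta_0$, $\Delta_0\equiv0,1\pmod4$, together with the definition \eqref{Eq2.1} of $\left(\frac{\Delta_0}{2}\right)$, lines up precisely with the thresholds $e-2s\in\{1,2,\geq3\}$ and the conditions $u\equiv1\pmod8$ / $u\equiv1\pmod4$ governing $S(\cdot,2^e)$; in particular the symbol value $0$ (i.e. $\Delta_0\equiv0\pmod4$) must match $e=2\ell+1$ giving $2^{\ell}$ and $e>2\ell+1$ giving $0$ of case (3), while $e<2\ell$ forces $\Delta/4\equiv0\pmod{2^e}$ and hence $2^{k}$. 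This is exactly where the normalization $\Delta_0\equiv0,1\pmod4$ earns its place: it is what makes the two-adic square condition $u\equiv1\pmod8$ coincide with $\left(\frac{\Delta_0}{2}\right)=1$. I expect this threshold-and-parity case analysis, rather than any single hard idea, to be the real work; the degenerate case $\Delta=0$ is again covered uniformly by $(2az+b)^2\equiv0\pmod{p^e}$.
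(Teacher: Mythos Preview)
The paper does not give a proof of this lemma at all; it is quoted verbatim as \cite[Theorem 0.3.4]{Le} and used as a black box in the proof of Proposition~\ref{pro1}. So there is nothing on the paper's side to compare your argument against.

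That said, your proposal is a correct and standard route. The reduction $n_{p^e}(f)=S(\Delta,p^e)$ via the bijection $z\mapsto 2az+b$ for odd $p$ is exactly right, and your identification $\ell=\lfloor v_p(\Delta)/2\rfloor$ (using $p^2\equiv1\pmod 8$ so that the condition $\Delta_0\equiv0,1\pmod4$ is automatic) cleanly matches cases (1)--(3) and the small-$e$ formula to the square-root count. For $p=2$ your split on the parity of $b$ is the natural move: when $b$ is odd, Hensel applies directly and the computation $\Delta\equiv1-4c\pmod8$ identifies $\left(\frac{\Delta_0}{2}\right)=\pm1$ with $c$ even/odd; when $b=2b'$, the substitution $z\mapsto az+b'$ gives $n_{2^e}(f)=S(\Delta/4,2^e)$. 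I checked the ``delicate'' bookkeeping you flagged: writing $\Delta/4=2^{t}u$ and running through $t$ odd, $t=2s$ with $u\equiv1\pmod4$, and $t=2s$ with $u\equiv3\pmod4$, the maximal $\ell$ comes out as $(t-1)/2$, $s+1$, $s$ respectively, and in each sub-case the thresholds $e-2s\in\{1,2,\geq3\}$ together with the conditions $u\equiv1\pmod4$, $u\equiv1\pmod8$ line up exactly with the stated values in (1), (2), (3) and with $p^k$ for small $e$. So the approach goes through; the only labour is, as you say, the case-check rather than any new idea.
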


Now we turn back to the proof of Proposition \ref{pro1}.

\begin{proof}[Proof of Proposition \ref{pro1}] Let
$$m=p_1^{e_1}\cdot\cdot\cdot p_t^{e_t}$$
be the standard factorization of $m$. Note that $2<\sqrt{p}$ for all $p>3$, which means that
$n_{p^e}(f)<\sqrt{p^e}$ for $p>3$ and $n_{p^e}(f)<2\sqrt{p^e}$ for $p=2,3$ from Lemma \ref{lem2}.
Taking $f(z)=z^2+a$ in Lemma \ref{lem1} and Lemma \ref{lem2} gives
$$n_m(f)=\prod_{i=1}^{t}n_{p_i^{e_i}}(f)<2^2\prod_{i=1}^t\sqrt{p_i^{e_i}}=4\sqrt{m}.$$
\end{proof}

The second proposition to be established is the following one.
\begin{proposition}\label{pro2} Let $m$ and $a$ be two integers and $2\leqslant y\leqslant m$ be a positive number. Denoting by $N(y,m;a)$ the number of the solutions to the congruence equation
$$z^2\equiv a\pmod{m}$$
with $1\leqslant z\leqslant y$, then we have
$$N(y,m;a)\ll y^{2/3}.$$
\end{proposition}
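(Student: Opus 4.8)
The plan is to bound $N(y,m;a)$ by the minimum of two estimates: one that is increasing in $m$, namely the bound $N(y,m;a)\leqslant 4\sqrt m$ furnished by Proposition \ref{pro1}, and one that is decreasing in $m$, which I will prove below; balancing the two at $m=y^{4/3}$ will yield the exponent $2/3$.

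For the first estimate, note that since $2\leqslant y\leqslant m$ the integers $1,2,\dots,\lfloor y\rfloor$ represent pairwise distinct residue classes modulo $m$, so every $z$ with $1\leqslant z\leqslant y$ solving $z^{2}\equiv a\pmod m$ corresponds to a distinct solution of $z^{2}\equiv a\pmod m$ in $\mathbb{Z}/m\mathbb{Z}$. By Proposition \ref{pro1} there are at most $4\sqrt m$ of the latter, whence $N(y,m;a)\leqslant 4\sqrt m$.

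For the second estimate I would argue directly that $z^{2}$ is forced to be small. Replacing $a$ by its least nonnegative residue modulo $m$ leaves $N(y,m;a)$ unchanged, so assume $0\leqslant a<m$. If $1\leqslant z\leqslant y$ and $z^{2}\equiv a\pmod m$, then $z^{2}-a=mt$ for some integer $t$; from $1\leqslant z^{2}\leqslant y^{2}$ and $0\leqslant a<m$ one gets $-1<t\leqslant y^{2}/m$, so $t$ is one of at most $y^{2}/m+1$ nonnegative integers, and for each of them there is at most one positive integer $z$ with $z^{2}=a+mt$. Hence
$$N(y,m;a)\leqslant \frac{y^{2}}{m}+1.$$

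Finally, split on the size of $m$. If $m\geqslant y^{4/3}$, the last display gives $N(y,m;a)\leqslant y^{2}/m+1\leqslant y^{2/3}+1\ll y^{2/3}$ (using $y\geqslant 2$). If $m<y^{4/3}$, the first estimate gives $N(y,m;a)\leqslant 4\sqrt m<4y^{2/3}$. In both cases $N(y,m;a)\ll y^{2/3}$, which is the assertion. The exponent is the best this method produces, since $y^{2}/m$ and $\sqrt m$ agree precisely at $m=y^{4/3}$, where both equal $y^{2/3}$. There is no serious obstacle: the crux is simply to notice that Proposition \ref{pro1} alone is wasteful when $m$ is large compared with $y^{4/3}$ and to supplement it by the elementary bound $N(y,m;a)\leqslant y^{2}/m+1$; the constant $4$ of Proposition \ref{pro1} (needed because of the primes $2$ and $3$) and the degenerate cases of very small $y$ are all absorbed harmlessly into the implied constant.
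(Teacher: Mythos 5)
Your proof is correct and follows essentially the same route as the paper: both use the $4\sqrt{m}$ bound from Proposition \ref{pro1} when $m<y^{4/3}$ and the elementary bound $N(y,m;a)\leqslant y^2/m+1$ when $m\geqslant y^{4/3}$, balancing at $m=y^{4/3}$. The only cosmetic difference is that the paper derives the second bound by noting each interval $[\sqrt{(j-1)m},\sqrt{jm})$ contains at most one solution, while you count the admissible values of $t=(z^2-a)/m$; these are the same observation.
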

\begin{proof}The proof of the proposition shall be divided into two cases. Suppose first that $m<y^{4/3}$, then we have
$$N(y,m;a)\leqslant4\sqrt{m}\ll\sqrt{m}<y^{2/3}$$
by the Proposition \ref{pro1}.
We now assume $m\geqslant y^{4/3}$. Let $t$ be the least integer such that $y< \sqrt{tm}$, which implies that
$$t=\left\lfloor\frac{y}{\sqrt{m}}\right\rfloor+1\leqslant\frac{y}{\sqrt{m}}+1.$$
Consider the following $t$ short intervals
$$[1,\sqrt{m}),~[\sqrt{m},\sqrt{2m}),~\cdot\cdot\cdot,~[\sqrt{(t-1)m},\sqrt{tm}).$$
It can be seen that there exists at most one integer $z$ between each interval satisfying the congruence equation
$$z^2\equiv a\pmod{m}.$$
Thus we have
$$N(y,m;a)\leqslant t\leqslant\frac{y^2}{m}+1\ll y^{2/3}.$$
\end{proof}
\section{Proof of The Theorem}
\begin{proof}[Proof of Theorem \ref{thm1}]
Let
$$r(n)=\#\left\{(p,m_1,m_2):n=p+2^{m_1^2}+2^{m_2^2},p\in \mathcal{P},m_1,m_2\in \mathbb{N}\right\}$$
be the corresponding representation function. In view of the Cauchy--Schwarz inequality, we have
\begin{equation}\label{e1}
\left(\sum_{n\leqslant x}r(n)\right)^2\leqslant V(x)\left(\sum_{n\leqslant x}r^2(n)\right).
\end{equation}
It is trivial that
\begin{equation}\label{e2}
\sum_{n\leqslant x}r(n)=\sum_{p+2^{m_1^2}+2^{m_2^2}\leqslant x}1\geqslant\pi\left(\frac{x}{3}\right)\cdot\left\lfloor\sqrt{\frac{\log(x/3)}{\log 2}}\right\rfloor\cdot\left\lfloor\sqrt{\frac{\log(x/3)}{\log 2}}\right\rfloor\gg x
\end{equation}
from the prime number theorem, where $\pi(x)$ is the number of primes below $x$. Hence, it remains to prove $\sum_{n\leqslant x}r^2(n)\ll x$. By the definition of $r(n)$, it can be seen that
\begin{align}\label{e3}
\sum_{n\leqslant x}r^2(n)=\sum_{n\leqslant x}\left(\sum_{n=p+2^{m_1^2}+2^{m_2^2}}1\right)^2
=\sum_{p+2^{m_1^2}+2^{m_2^2}=q+2^{k_1^2}+2^{k_2^2}\leqslant x}1,
\end{align}
where $p,q$ are primes and $m,k$ are natural numbers to the rest of the proof.
That is to say, $\sum_{n\leqslant x}r^2(n)$ does not exceed the number of the solutions to the following constraints
\begin{equation}\label{c1}
p+2^{m_1^2}+2^{m_2^2}=q+2^{k_1^2}+2^{k_2^2},~p,q\leqslant x,~m_i,k_i\leqslant\sqrt{\log x/\log2},~i=1,2.
\end{equation}
From now on, let
\begin{equation}\label{e4}
h=2^{m_1^2}+2^{m_2^2}-2^{k_1^2}-2^{k_2^2}.
\end{equation}
First we consider the case $h=0$. Without loss of generality, we can suppose that $m_1\geqslant k_1$. In this case we have
\begin{equation}\label{e5}
2^{k_1^2}\left(2^{m_1^2-k_1^2}-1\right)=2^{m_2^2}\left(2^{k_2^2-m_2^2}-1\right).
\end{equation}
Note that both $2^{m_1^2-k_1^2}-1$ and $2^{k_2^2-m_2^2}-1$ are odd numbers if $m_1\neq k_1$, which means $k_1=m_2$ and $k_2=m_1$ in this situation. Thus we conclude that $h=0$ if and only if $\{m_1,m_2\}=\{k_1,k_2\}$.
For $h\neq0$, let $\pi_2(x,h)$ be the number of the prime pairs $p$ and $q$ with $q-p=h$ below $x$. It is well known (see for example \cite[Theorem 7.3]{Na}) that
\begin{align}\label{c2}
\pi_2(x,h)\ll\frac{x}{\log^2x}\prod_{p|h}\left(1+\frac{1}{p}\right).
\end{align}
Therefore from the analysis above and equation (\ref{c2}), it follows that
\begin{align}\label{e6}
\sum_{n\leqslant x}r^2(n)\ll\sum_{\substack{m_1,m_2\leqslant\sqrt{\frac{\log x}{\log2}}\\k_1=m_1,k_2=m_2}}\pi(x)+\sum_{\substack{m_i,k_j\leqslant\sqrt{\frac{\log x}{\log2}}\\i,j=1,2\\h\neq0}}\frac{x}{\log^2x}\prod_{p|2^{m_1^2}+2^{m_2^2}-2^{k_1^2}-2^{k_2^2}}\left(1+\frac{1}{p}\right).
\end{align}
Still by the prime number theorem, we have
\begin{equation}\label{e7}
\sum_{\substack{m_1,m_2\leqslant\sqrt{\frac{\log x}{\log2}}\\k_1=m_1,k_2=m_2}}\pi(x)\ll x.
\end{equation}
So it remains to prove
\begin{equation}\label{e8}
\sum_{\substack{m_i,k_j\leqslant\sqrt{\frac{\log x}{\log2}}\\i,j=1,2\\h\neq0}}\prod_{p|2^{m_1^2}+2^{m_2^2}-2^{k_1^2}-2^{k_2^2}}\left(1+\frac{1}{p}\right)\ll\log^2x.
\end{equation}
Since the factor $1+\frac{1}{p}$ for $p=2$ is bounded by $\frac{3}{2}$ which can be neglected, we need only to consider odd primes $p$ in the equation (\ref{e8}). Following an idea of Elsholtz et al. \cite{Els}, we split the product into the following two parts:
\begin{equation}\label{twpa}
\prod_{\substack{p|2^{m_1^2}+2^{m_2^2}-2^{k_1^2}-2^{k_2^2}\\2<p<\log x}}\left(1+\frac{1}{p}\right)~~~~\text{and}~~~~\prod_{\substack{p|2^{m_1^2}+2^{m_2^2}-2^{k_1^2}-2^{k_2^2}
\\p\geqslant\log x}}\left(1+\frac{1}{p}\right).
\end{equation}
Note that $2^{m_1^2}+2^{m_2^2}-2^{k_1^2}-2^{k_2^2}<2x$, we know that the number of the primes satisfying the constraints $p\geqslant\log x$ and $p|2^{m_1^2}+2^{m_2^2}-2^{k_1^2}-2^{k_2^2}$ is no larger than $\frac{\log 2x}{\log 2}$. Hence the second product can be bounded as
\begin{equation}\label{eq1}
\prod_{\substack{p|2^{m_1^2}+2^{m_2^2}-2^{k_1^2}-2^{k_2^2}
\\p\geqslant\log x}}\left(1+\frac{1}{p}\right)\ll\left(1+{\frac{1}{\log x}}\right)^{\frac{\log 2x}{\log 2}}\ll1.
\end{equation}
Thus to prove the equation (\ref{e8}), it suffices to prove
\begin{equation}\label{eq2}
\sum_{\substack{m_i,k_j\leqslant\sqrt{\frac{\log x}{\log2}}\\i,j=1,2\\h\neq0}}\prod_{\substack{p|2^{m_1^2}+2^{m_2^2}-2^{k_1^2}-2^{k_2^2}\\2<p<\log x}}\left(1+\frac{1}{p}\right)
\ll\log^2x.
\end{equation}
It is not difficult to see that
\begin{equation}\label{e9}
\sum_{\substack{m_i,k_j\leqslant\sqrt{\frac{\log x}{\log2}}\\i,j=1,2\\h\neq0}}\prod_{\substack{p|2^{m_1^2}+2^{m_2^2}-2^{k_1^2}-2^{k_2^2}\\2<p<\log x}}\left(1+\frac{1}{p}\right)=
\sum_{\substack{d<2x\\2\nmid d\\P^+(d)<\log x}}\frac{\mu^2(d)}{d}\sum_{\substack{m_i,k_i\leqslant\sqrt{\frac{\log x}{\log2}}\\i,j=1,2,~h\neq0\\d|2^{m_1^2}+2^{m_2^2}-2^{k_1^2}-2^{k_2^2}}}1,
\end{equation}
where $\mu(d)$ is the M\"{o}bius function and $P^+(d)$ is the largest prime factor of $d$.
For any fixed integers $d$, $m_1,~m_2$ and $k_1$ with $2\nmid d$ and $d|2^{m_1^2}+2^{m_2^2}-2^{k_1^2}-2^{k_2^2}$, there is some $l_{m_1,m_2,k_1}$ such that
\begin{align}\label{e10}
2^{k_2^2}\equiv2^{m_1^2}+2^{m_2^2}-2^{k_1^2}\equiv l_{m_1,m_2,k_1}\pmod{d}.
\end{align}
Let $e_2(d)$ stand for the order of $2$ modulo $d$. Then from the congruent condition of the equation (\ref{e10}), we know that there exists some $\widetilde{l}_{m_1,m_2,k_1}$ such that
\begin{equation}\label{e11}
k_2^2\equiv\widetilde{l}_{m_1,m_2,k_1}\pmod{e_2(d)},
\end{equation}
providing that there do exist positive integers $k_2$ satisfying the equation (\ref{e10}).
For $e_2(d)\leqslant \sqrt{\frac{\log x}{\log2}}$, the number of the solutions to the congruence equation (\ref{e11}) is at most $4\sqrt{e_2(d)}$ from Proposition \ref{pro1}. Thus there are at most
\begin{equation}\label{e12}
\left(\left\lfloor\frac{\sqrt{\frac{\log x}{\log2}}}{e_2(d)}\right\rfloor+1\right)4\sqrt{e_2(d)}
\end{equation}
choices of $k_2$ below $\sqrt{\frac{\log x}{\log2}}$ if $d,m_1,~m_2$ and $k_1$ are fixed for $e_2(d)\leqslant \sqrt{\frac{\log x}{\log2}}$. For $e_2(d)> \sqrt{\frac{\log x}{\log2}}$, the number of the solutions to the congruence equation (\ref{e11}) below $\sqrt{\frac{\log x}{\log2}}$ is at most $O\left(\frac{\sqrt{\log x}}{\log\log x}\right)$ from the Proposition \ref{pro2}.
Taking these bounds of the solutions to the congruent condition into the equation (\ref{e9}), we obtain that

\begin{small}
\begin{align}\label{e13}
\sum_{\substack{d<2x\\2\nmid d\\P^+(d)<\log x}}\!\!\!\frac{\mu^2(d)}{d}\!\!\!\!\!\!\sum_{\substack{m_i,k_j\leqslant\sqrt{\frac{\log x}{\log2}}\\i,j=1,2,~h\neq0\\d|2^{m_1^2}+2^{m_2^2}-2^{k_1^2}-2^{k_2^2}}}1
\ll&\sum_{\substack{d<2x\\2\nmid d\\P^+(d)<\log x\\e_2(d)\leqslant\sqrt{\frac{\log x}{\log2}}}}\!\!\!\!\!\frac{\mu^2(d)}{d}\left(\sqrt{\frac{\log x}{\log2}}\right)^3\left(\left\lfloor\frac{\sqrt{\frac{\log x}{\log2}}}{e_2(d)}\right\rfloor+1\right)4\sqrt{e_2(d)}\nonumber\\
&~~~+\sum_{\substack{d<2x\\2\nmid d\\P^+(d)<\log x\\e_2(d)>\sqrt{\frac{\log x}{\log2}}}}\!\!\!\frac{\mu^2(d)}{d}\left(\sqrt{\frac{\log x}{\log2}}\right)^3\frac{\sqrt{\log x}}{\log\log x}\nonumber\\
\ll& (\log x)^2\!\!\!\!\!\!\sum_{\substack{d<2x\\2\nmid d\\P^+(d)<\log x\\e_2(d)\leqslant\sqrt{\frac{\log x}{\log2}}}}\!\!\!\!\frac{\mu^2(d)}{d\sqrt{e_2(d)}}+\frac{(\log x)^2}{\log\log x}\!\!\!\!\!\!\sum_{\substack{d<2x\\2\nmid d\\P^+(d)<\log x\\e_2(d)>\sqrt{\frac{\log x}{\log2}}}}\!\!\!\frac{\mu^2(d)}{d},\nonumber\\
\ll& (\log x)^2S_1(x)+\frac{(\log x)^2}{\log\log x}S_2(x),~\text{say}.
\end{align}
\end{small}
By a result of Erd\H{o}s and Tur\'{a}n \cite{ET}, we have
\begin{equation}\label{e17}
\sum_{2\nmid d}\frac{1}{d\sqrt{e_2(d)}}<\infty.
\end{equation}
Therefore the sum $S_1(x)$ can be bounded as
\begin{equation}\label{e18}
S_1(x)=\sum_{\substack{d<2x\\2\nmid d\\P^+(d)<\log x\\e_2(d)\leqslant\sqrt{\frac{\log x}{\log2}}}}\frac{1}{d\sqrt{e_2(d)}}\leqslant\sum_{\substack{2\nmid d}}\frac{1}{d\sqrt{e_2(d)}}<\infty.
\end{equation}
To deal with the sum $S_2(x)$, we exchange the summations back to the product.
\begin{align}\label{e19}
S_2(x)=\sum_{\substack{d<2x\\2\nmid d\\P^+(d)<\log x\\e_2(d)>\sqrt{\frac{\log x}{\log2}}}}\frac{\mu^2(d)}{d}<\sum_{\substack{d<2x\\2\nmid d\\P^+(d)<\log x}}\frac{\mu^2(d)}{d}
\leqslant\prod_{2<p<\log x}\left(1+\frac{1}{p}\right).
\end{align}
By the logarithmic inequality
\begin{equation}\label{e21}
\log\left(1+\frac{1}{p}\right)<\frac{1}{p}
\end{equation}
and the Mertens formula
\begin{equation}\label{e20}
\sum_{p<\log x}\frac{1}{p}=\log\log\log x+B+O\left(\frac{1}{\log\log x}\right),
\end{equation}
we obtain that
\begin{align}\label{e25}
\prod_{2<p<\log x}\left(1+\frac{1}{p}\right)\leqslant \exp\left(\log\log\log x+B+O\left(\frac{1}{\log\log x}\right)\right)\ll \log\log x.
\end{align}
The proof of the theorem is complete by combining the equations (\ref{e8}), (\ref{eq2}), (\ref{e9}), (\ref{e13}), (\ref{e18}), (\ref{e19}) and (\ref{e25}).
\end{proof}

\section*{Acknowledgments}
The author would like to thank the referee for his helpful comments which improved the quality of the paper greatly. The author also would like to thank doctor Guilin Li and doctor Li--Yuan Wang for their careful reading of the manuscript. 

The author was supported by the Natural Science Foundation of Jiangsu Province of China, Grant No. BK20210784. He was also supported by the foundations of the projects ``Jiangsu Provincial Double--Innovation Doctor Program'', Grant No. JSSCBS20211023 and ``Golden  Phoenix of the Green City--Yang Zhou'' to excellent Ph.D., Grant No. YZLYJF2020PHD051.

\end{document}